\tikzset{->-/.style={decoration={
  markings,
  mark=at position 0.5 with {\arrow{stealth}}},postaction={decorate}}}
\tikzset{->>-/.style={decoration={
  markings,
  mark=at position 0.5 with {\arrow{>>}}},postaction={decorate}}}
\tikzset{snake it/.style={decorate, decoration=snake}}
\newcolumntype{R}{>{$}r<{$}}
\newcolumntype{C}{>{$}c<{$}}
\newcolumntype{L}{>{$}l<{$}}
\definecolor{dark-red}{rgb}{0.4,0.15,0.15}
\definecolor{dark-blue}{rgb}{0.15,0.15,0.4}
\definecolor{medium-blue}{rgb}{0,0,0.5}
\LetLtxMacro{\amsmathdots}{\dots}
\DeclareMathOperator{\ord}{ord}
\DeclareMathOperator{\Aut}{Aut}
\DeclareMathOperator{\Sym}{Sym}
\DeclareMathOperator{\Stab}{Stab}
\DeclareMathOperator{\PGL}{PGL}
\DeclareMathOperator{\PConf}{PConf}
\DeclareMathOperator{\UConf}{UConf}
\DeclareMathOperator{\Frob}{Frob}
\DeclareMathOperator{\Tr}{Tr}
\newcommand*{\et}{\text{ét}}
\newcommand*{\dispunct}[1]{\,\text{#1}}
\newcommand{\from}{\vcentcolon}
\newcommand{\divides}{\mathbin{\vert}}
\newcommand{\dsum}{\oplus}
\newcommand{\dSum}{\bigoplus}
\newcommand{\tensor}{\otimes}
\renewcommand{\emptyset}{\varnothing}
\NewDocumentCommand\xDeclarePairedDelimiter{mmm}
 {%
  \NewDocumentCommand#1{som}{%
   \IfNoValueTF{##2}
    {\IfBooleanTF{##1}{#2##3#3}{\mleft#2##3\mright#3}}
    {\mathopen{##2#2}##3\mathclose{##2#3}}%
  }%
 }
\xDeclarePairedDelimiter{\abs}{\lvert}{\rvert}
\xDeclarePairedDelimiter{\norm}{\lVert}{\rVert}
\xDeclarePairedDelimiter{\floor}{\lfloor}{\rfloor}
\xDeclarePairedDelimiter{\ceil}{\lceil}{\rceil}
\xDeclarePairedDelimiter{\gen}{\langle}{\rangle}
\xDeclarePairedDelimiter{\pseries}{\llbracket}{\rrbracket}
\xDeclarePairedDelimiter{\oneto}{[}{]}
\xDeclarePairedDelimiter{\parenth}{(}{)}
\NewDocumentCommand{\set}{somm}{%
   \IfNoValueTF{#2}
    {\IfBooleanTF{#1}{\{#3 \mid #4\}}{\mleft\{ #3 \mathrel{}\middle\vert\mathrel{} #4 \mright\}}}
    {\mathopen{#2\{}#3 \mathrel{}#2\vert\mathrel{} #4\mathclose{#2\}}}%
  }
\NewDocumentCommand{\present}{somm}{%
   \IfNoValueTF{#2}
    {\IfBooleanTF{#1}{\langle#3 \mid #4\rangle}{\mleft\langle#3 \mathrel{}\middle\vert\mathrel{} #4 \mright\rangle}}
    {\mathopen{#2\langle}#3 \mathrel{}#2\vert\mathrel{} #4\mathclose{#2\rangle}}%
  }
\NewDocumentCommand{\inner}{somm}{%
   \IfNoValueTF{#2}
    {\IfBooleanTF{#1}{\langle#3 , #4\rangle}{\mleft\langle#3 , #4 \mright\rangle}}
    {\mathopen{#2\langle}#3 , #4\mathclose{#2\rangle}}%
  }
\newcommand{\CC}{\mathbb{C}}
\newcommand{\FF}{\mathbb{F}}
\newcommand{\PP}{\mathbb{P}}
\newcommand{\QQ}{\mathbb{Q}}
\newcommand{\ZZ}{\mathbb{Z}}
\newcommand{\fS}{\mathfrak{S}}
\newcommand*{\widebar}[1]{\mkern 1.5mu\overline{\mkern-1.5mu#1\mkern-1.5mu}\mkern 1.5mu}
\newcommand*{\cl}[1]{
\begingroup
    \setbox\z@=\hbox{\ensuremath{#1}}%
    \ifdimgreater{\wd\z@}{4em}{\mleft(#1\mright)^{-}}{\widebar{#1}}
\endgroup
}
\newcommand*{\interior}[1]{
\begingroup
    \setbox\z@=\hbox{\ensuremath{#1}}%
    \ifdimgreater{\wd\z@}{1.5em}{\mleft(#1\mright)^{\circ}}{\accentset{\circ}{#1}}
\endgroup
}
\numberwithin{equation}{section}
\declaretheorem[sibling=equation]{theorem}
\declaretheorem[sibling=theorem]{corollary}
\declaretheorem[sibling=theorem]{proposition}
\declaretheorem[numbered=no, title=Theorem]{theorem*}
\declaretheorem[numbered=no, title=Corollary]{corollary*}
\declaretheorem[numbered=no, title=Lemma]{lemma*}
\declaretheorem[numbered=no, title=Proposition]{proposition*}
\declaretheorem[numbered=no, title=Conjecture]{conjecture*}
\declaretheorem[numbered=no, style=definition, title=Definition]{definition*}
\declaretheorem[numbered=no, style=definition, title=Exercise]{exercise*}
\declaretheorem[sibling=theorem, style=remark]{remark}
\declaretheorem[numbered=no, style=remark, title=Remark]{remark*}
\declaretheorem[numbered=no, style=remark, title=Example]{example*}
\newcommand*{\Gr}{\mathrm{Gr}}
\newlist{singularity}{enumerate}{2}
\setlist[singularity,1]{label=(\Roman*),noitemsep, ref=\Roman*}
\setlist[singularity,2]{label=(\alph*),noitemsep, ref=\alph*}
\def\paragraph{\@startsection{paragraph}{4}%
  \z@\z@{-\fontdimen2\font}%
  {\normalfont\bfseries}}
\title{Arithmetic statistics on cubic surfaces}
\author{Ronno Das}
\begin{document}

\maketitle

\begin{abstract}
In this paper we compute the distributions of various markings on smooth cubic surfaces defined over the finite field $\FF_q$, for example the distribution of pairs of points, `tritangents' or `double sixes'.
We also compute the (rational) cohomology of certain associated bundles and covers over complex numbers.
\end{abstract}

\section{Introduction}

The classical Cayley--Salmon theorem implies that each smooth cubic surface over an algebraically closed field contains exactly $27$ lines (see \cref{definitions} for detailed definitions).
In contrast, for a surface over a finite field $\FF_q$, all $27$ lines are defined over $\overline{\FF}_q$ but not necessarily over $\FF_q$ itself.
In other words, the action of the Frobenius $\Frob_q$ permutes the $27$ lines and only fixes a (possibly empty) subset of them.
It is also classical that the group of all such permutations, which can be identified with the Galois group of an appropriate extension or cover, is isomorphic to the Weyl group $W(E_6)$ of type $E_6$.

This permutation of the $27$ lines governs much of the arithmetic of the surface $S$: evidently the pattern of lines defined over $\FF_q$ and, less obviously, the number of $\FF_q$ points on $S$ (or $\UConf^n S$ etc).
Work of Bergvall and Gounelas \cite{BG2019} allows us to compute the number of cubic surfaces over $\FF_q$ where $\Frob_q$ induces a given permutation, or rather a permutation in a given conjugacy class of $W(E_6)$.
The results in this paper can be thought of as a combinatorial (\cref{point-count-theorem}) or representation-theoretic (\cref{cohomology-theorem}) reinterpretation of their computation.

\begin{theorem}\label{point-count-theorem}
Over the finite field $\FF_q$, the number of smooth cubic surfaces on whose $27$ lines $\Frob_q$ acts by a given conjugacy class of $W(E_6)$ is as in \cref{point-count-by-conjugacy-class}.
\end{theorem}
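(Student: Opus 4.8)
The plan is to translate the combinatorial datum ``$\Frob_q$ acts in conjugacy class $C$'' into a family of weighted point counts, evaluate those by the Grothendieck--Lefschetz trace formula, and then invert the resulting character sums. First I would fix the parameter space $U = \PP^{19} \setminus \Delta$ of smooth cubic surfaces, where $\Delta$ is the discriminant hypersurface, so that a smooth cubic surface over $\FF_q$ is a point of $U(\FF_q)$. Choosing a labelling of the $27$ lines compatible with their incidence pattern — equivalently, an identification of the geometric Picard lattice with the standard $E_6$ lattice — defines a connected finite étale $W(E_6)$-cover $\pi \from \widetilde U \to U$, i.e.\ a $W(E_6)$-torsor. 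For $S \in U(\FF_q)$ the Frobenius permutes the geometric fibre $\pi^{-1}(S)$, yielding a well-defined-up-to-conjugacy element $\Frob_S \in W(E_6)$ whose conjugacy class is exactly the datum named in the theorem.

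Next, to each irreducible representation $\rho$ of $W(E_6)$ I would attach the $\ell$-adic local system $\cV_\rho = \widetilde U \times^{W(E_6)} V_\rho$ on $U$; the trace of $\Frob_q$ on the stalk $(\cV_\rho)_S$ is the character value $\chi_\rho(\Frob_S)$, so the Grothendieck--Lefschetz trace formula computes the weighted counts
\[
N_\rho(q) \coloneqq \sum_{S \in U(\FF_q)} \chi_\rho(\Frob_S) = \sum_i (-1)^i \tr\parenth{\Frob_q \mid H^i_c(U_{\Fqbar}, \cV_\rho)}.
\]
Because $W(E_6)$ has a rational — indeed integral — character table, orthogonality of characters expresses the indicator of a conjugacy class $C$ (with representative $c$) as $\mathds{1}_C = \tfrac{\abs{C}}{\abs{W(E_6)}} \sum_\rho \chi_\rho(c)\, \chi_\rho$, and summing over $U(\FF_q)$ collapses the count to the clean formula
\[
\#\set*{S \in U(\FF_q)}{\Frob_S \in C} = \frac{\abs{C}}{\abs{W(E_6)}} \sum_{\rho} \chi_\rho(c)\, N_\rho(q).
\]

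The remaining input is the determination of each $H^\bullet_c(U_{\Fqbar}, \cV_\rho)$ as a Galois representation, for all $25$ irreducibles $\rho$: this is precisely the content supplied by the computation of Bergvall and Gounelas \cite{BG2019}, repackaged representation-theoretically in \cref{cohomology-theorem}. Since these cohomology groups are of Tate type, the Frobenius eigenvalues are integer powers of $q$, so each $N_\rho(q)$ is a polynomial in $q$; feeding these into the displayed formula and collecting class by class produces \cref{point-count-by-conjugacy-class}. The genuine mathematical work is hidden in that cohomological input, which I am importing; within this argument the main obstacle I expect is organizational rather than conceptual — fixing a consistent indexing of the irreducibles of $W(E_6)$ so that the Bergvall--Gounelas data matches my $\cV_\rho$, carrying out the full $25 \times 25$ character-table bookkeeping without sign or Tate-twist errors, and verifying that the alternating sums telescope into exactly the tabulated polynomials.
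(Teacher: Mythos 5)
Your proposal follows essentially the same route as the paper: a twisted Grothendieck--Lefschetz trace formula applied to the local systems associated to the $W(E_6)$-torsor $Y \to X$, inverted by character orthogonality, with the cohomological input (including purity, so that Frobenius acts on $H^i$ by exactly $q^{-i}$) imported from Bergvall--Gounelas as in \cref{cohomology-of-Y}. The only quibbles are cosmetic: the relevant imported computation is \cref{cohomology-of-Y} rather than \cref{cohomology-theorem}, and the paper phrases the purity input via the $\PGL(4)$-quotient, which is where the normalizing factor $\#\PGL(4,\FF_q)$ in \cref{point-count-by-conjugacy-class} comes from.
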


\begin{table}
\caption{The number of cubic surfaces over $\FF_q$ on whose $27$ lines $\Frob_q$ acts by a given conjugacy class of $W(E_6)$. The factors in the second column are to normalize to a degree $4$ monic polynomial in $q$ (and come up naturally in the representation theoretic setup, see \cref{twisted-point-counts}). Some of these factors appear in later tables for the same reasons. The third column lists $q$ for which the count in the second column vanishes. See \cref{conjugacy-class-notation} for the notation used for conjugacy classes.}
\label{point-count-by-conjugacy-class}
\centering
\begin{tabular}{cRc}
\toprule
Conjugacy class $c$ & \dfrac{\#\{S| \Frob_{q,S} \sim c\}}{\#\PGL(4,\FF_q)} \times \dfrac{\# W(E_6)}{\# c}  & $\#\{S| \Frob_{q,S} \sim c\} = \emptyset$ for $q = $\\ \midrule
            $(1^6)$             &    (q-2)(q-3)(q-5)^2 & $2, 3, 5$ \\
          $(1^2,2^2)$           &    (q+1)^2(q-2)(q-3) &  $2, 3$   \\
        $(1^{-2},2^4)$          & (q-2)(q-3)(q^2-2q-7) &  $2, 3$   \\
           $(1^3,3)$            &      q(q+1)(q^2-q+1) &           \\
        $(1^{-3},3^3)$          &     (q+1)^2(q^2+q-3) &           \\
            $(3^2)$             &  (q-2)(q^3-q^2-2q-6) &    $2$    \\
      $(1^2,2^{-2},4^2)$        &         (q+1)^3(q-2) &    $2$    \\
            $(2,4)$             &    (q+1)(q-2)(q^2+1) &    $2$    \\
            $(1,5)$             &           q^2(q^2+1) &           \\
       $(1,2,3^{-1},6)$         &      q(q+1)(q^2+q-1) &           \\
       $(1^{-1},2^2,3)$         &      q(q+1)(q^2-q+1) &           \\
        $(1^{-2},2,6)$          &      q(q-2)(q^2+q+2) &    $2$    \\
    $(1,2^{-2},3^{-1},6^2)$     & (q+1)(q^3-2q^2+2q-3) &           \\
         $(3^{-1},9)$           &      q(q+1)(q^2-q+1) &           \\
$(1^{-1},2,3,4^{-1},6^{-1},12)$ &     (q+1)^2(q^2-q+1) &           \\ \midrule

           $(1^4,2)$            &     q(q-1)(q^2-4q+5) &           \\
            $(2^3)$             &        q(q-1)(q^2-3) &           \\
           $(1^2,4)$            &        q(q+1)^2(q-1) &           \\
       $(1^{-2},2^2,4)$         &             q(q-1)^3 &           \\
           $(1,2,3)$            &      q(q-1)(q^2-q-1) &           \\
       $(1^{-2},2,3^2)$         &     q(q-1)(q^2+2q+2) &           \\
             $(6)$              &             q^3(q-1) &           \\
        $(2,4^{-1},8)$          &        q(q+1)(q^2+1) &           \\
        $(1^{-1},2,5)$          &        q^2(q+1)(q-1) &           \\
    $(1,2^{-1},3^{-1},4,6)$     &      q(q-1)(q^2+q+1) &           \\ \bottomrule
\end{tabular}
\end{table}

It is worth noting how this relates to the distribution predicted by the Cebotarev density theorem: for a fixed smooth cubic surface defined over $\ZZ$, the conjugacy class of $\Frob_p$ acting on the $27$ lines of the $\bmod\ p$ reduction is distributed (as $p \to \infty$) proportional to the sizes of the conjugacy classes.
\Cref{point-count-theorem}, on the other hand, describes for each fixed $q$ the distribution over all smooth cubic surfaces defined over $\FF_q$.
The asymptotic distribution as $q \to \infty$ is still proportional to the size of the conjugacy class (with the normalization factor in \cref{point-count-by-conjugacy-class}, this corresponds to the polynomials listed each being monic); this is a relatively easy application of the Grothendieck--Lefschetz trace formula for twisted coefficients (see \cref{twisted-point-counts}).

It also follows from the trace formula that a cubic surface over $\FF_q$ contains $q^2 + tq + 1$ points, where $t$ is the trace of $\Frob_q$ on an appropriate representation of $W(E_6)$ (specifically, $V_1 \dsum V_6$ in the notation of \cref{irrep-notation}).
Adding up the counts for a given $t$, we get the number of surfaces containing a given number of points.

\begin{corollary}\label{point-count-by-point-count-corollary}
Over the finite field $\FF_q$, the number of smooth cubic surfaces with $q^2 + tq + 1$ points is given by \cref{point-count-by-point-count}.
\end{corollary}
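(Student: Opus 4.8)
The plan is to deduce the corollary from \cref{point-count-theorem} purely by regrouping, so no new geometric input is needed beyond the point-count formula already recorded above. Recall that input: by the Grothendieck--Lefschetz trace formula a smooth cubic surface $S/\FF_q$ satisfies $\#S(\FF_q) = q^2 + tq + 1$ with $t = \tr(\Frob_q \mid V_1 \dsum V_6)$, where $H^0 \dsum H^4$ contributes the $1 + q^2$ and $H^2(S) \iso (V_1 \dsum V_6)\tensor\QQ_\ell(-1)$ contributes the middle term $qt$ (Frobenius acting as $q$ times the relevant element of $W(E_6)$ on the $7$-dimensional Picard lattice). The essential point is that $t$ is a class function of the image of $\Frob_{q,S}$ in $W(E_6)$, so the locus of surfaces with a prescribed point count is a union of the strata enumerated in \cref{point-count-theorem}, and the corollary is just the corresponding partition of that table.

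First I would compute $t$ on each of the $25$ classes. Since $V_1$ is the trivial representation it contributes $1$, so $t = 1 + \chi_{V_6}(c)$, where $V_6$ is the $6$-dimensional reflection representation. Under the convention of \cref{conjugacy-class-notation}, a class labelled $(d_1^{e_1}, d_2^{e_2}, \dots)$ has characteristic polynomial $\prod_i (x^{d_i}-1)^{e_i}$ on $V_6$, with $\sum_i d_i e_i = 6$ and $e_i$ allowed to be negative (a genuine polynomial after cancellation). The eigenvalues are then the $d_i$-th roots of unity with signed multiplicity $e_i$, and since the roots of $x^d-1$ sum to $1$ when $d=1$ and to $0$ when $d\ge 2$, the trace telescopes to $\chi_{V_6}(c) = \sum_{i:\,d_i=1} e_i$. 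In other words $t$ equals $1$ plus the (possibly negative) exponent attached to the symbol $1$. As a check, the identity class $(1^6)$ gives the maximal $t=7$, matching $\#S(\FF_q)=q^2+7q+1$ for the split surface $\PP^2$ blown up at six rational points, while the reflection class $(1^4,2)$ gives $t = 1+4 = 5$.

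Reading off this exponent across \cref{point-count-by-conjugacy-class} sorts the classes into nine groups indexed by the nine distinct values $t \in \{-2,-1,0,1,2,3,4,5,7\}$ (the value $t=6$ happens not to occur). Writing $Q_c(q)$ for the normalized polynomial in the second column, so that $\#\{S \mid \Frob_{q,S}\sim c\} = Q_c(q)\cdot\#\PGL(4,\FF_q)\cdot \#c/\#W(E_6)$, the count for a fixed point number is $\#\{S : \#S(\FF_q)=q^2+tq+1\} = \#\PGL(4,\FF_q)\sum_{c:\,t(c)=t} Q_c(q)\cdot \#c/\#W(E_6)$. Carrying out these sums for each of the nine groups and recording them (in whatever normalization \cref{point-count-by-point-count} adopts) produces the entries of that table.

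The content is bookkeeping rather than a new idea, so the main obstacle is arithmetic hygiene: one must reinstate the class-size weights $\#c/\#W(E_6)$ \emph{before} adding, since the tabulated polynomials are not themselves additive across classes, while factoring out the common $\#\PGL(4,\FF_q)$. A convenient consistency check is that summing over all nine groups must reproduce the total count of smooth cubic surfaces over $\FF_q$, and that the $q\to\infty$ leading coefficients recover the Cebotarev proportions $\sum_{c:\,t(c)=t}\#c/\#W(E_6)$ aggregated over each $t$-group, exactly as in the remark following \cref{point-count-theorem}.
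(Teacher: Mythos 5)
Your proposal is correct and follows the same route as the paper: the trace formula gives $\#S(\FF_q)=q^2+tq+1$ with $t=\tr(\Frob_{q,S}\mid V_1\dsum V_6)=1+i_1$ (the exponent of $1$ in the virtual cycle type), and the table is obtained by summing the class-size--weighted counts of \cref{point-count-by-conjugacy-class} over the classes sharing each of the nine values of $t$. Your extraction of $\chi_{V_6}(c)$ from the cycle-type notation and your bookkeeping of the normalization factors both check out against \cref{character-table} and the tabulated polynomials.
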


\begin{table}
\caption{The number of cubic surfaces over $\FF_q$ with a given number of points.}
\label{point-count-by-point-count}
\centering
\begin{tabular}{RR}
	\toprule
	 t & \#\{S(\FF_q) = q^2 + tq+1\} \times \dfrac{\# W(E_6)}{\#\PGL(4,\FF_q)} \\ \midrule
	-2 &                                                    80(q^2+q-3)(q+1)^2 \\
	-1 &                            45 (77 q^4 - 43 q^3 + 45 q^2 - 181 q - 42) \\
	 0 &                                           432(27q^3-17q^2+5q+10)(q+1) \\
	 1 &                           60 (347 q^4 - 51 q^3 + 27 q^2 + 161 q - 12) \\
	 2 &                             144 (91 q^4 - 5 q^3 + 36 q^2 - 35 q - 15) \\
	 3 &                                                270(9q^2-13q+2)(q+1)^2 \\
	 4 &                                                    240(q^2-q+1)(q+1)q \\
	 5 &                                                    36(q^2-4q+5)(q-1)q \\
	 7 &                                                     (q-2)(q-3)(q-5)^2 \\ \bottomrule
\end{tabular}
\end{table}

In particular, a cubic surface for a pair $(t,q)$ exists unless the polynomial listed in \cref{point-count-by-point-count} vanishes at $q$ (i.e.\ unless $t = 7$ and $q = 2, 3, 5$).
These pairs were known before, by \cite{SwinnertonDyer2010} and \cite{BFL2018}, but the exact number of surfaces were not.
We can also do this for each conjugacy class individually (as in the third column of \cref{point-count-by-conjugacy-class}) and recover more recent results of Loughran and Trepalin \cite{LT2018}.

\begin{corollary}[{\cite[Theorem~1.1]{LT2018}}]
Other than the following exceptions, over every prime power $q$, every conjugacy class of $W(E_6)$ occurs as the class of $\Frob_q$ acting on the lines of some cubic surface:
\begin{itemize}[noitemsep]
\item the conjugacy class $(1^6)$ (i.e. identity) does not appear for $q = 2, 3, 5$;
\item the conjugacy classes $(1^2,2^2)$ and $(1^{-2},2^4)$ do not appear for $q = 2, 3$;
\item the conjugacy classes $(3^2)$, $(1^{-2},2^2,4)$, $(2,4)$ and $(1^{-2},2,6)$ do not appear for $q = 2$.
\end{itemize}
\end{corollary}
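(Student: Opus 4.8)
The plan is to read this off directly from \cref{point-count-theorem}. That theorem identifies the quantity in the second column of \cref{point-count-by-conjugacy-class} with a polynomial $P_c(q)$, so that
\[
\#\{S \mid \Frob_{q,S} \sim c\} = \#\PGL(4,\FF_q) \cdot \frac{\#c}{\#W(E_6)} \cdot P_c(q).
\]
Because the first two factors are strictly positive for every prime power $q$, the count is a nonnegative integer equal to a positive constant times $P_c(q)$; hence $P_c(q) \geq 0$ at every prime power, and the class $c$ is realized over $\FF_q$ if and only if $P_c(q) \neq 0$. The corollary therefore reduces to determining, for each of the $25$ polynomials $P_c$ in the table, exactly which prime powers are roots.

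First I would observe that any factor of the shape $q$, $q-1$, or $q+1$ is strictly positive for every prime power $q \geq 2$, and so can never produce a vanishing. This at once settles the ten classes below the midrule, whose polynomials are assembled solely from powers of $q$, $q-1$, $q+1$ and, in some rows, a single irreducible quadratic. For the quadratics that occur there---namely $q^2-4q+5$, $q^2-3$, $q^2-q-1$, $q^2+2q+2$, $q^2+1$ and $q^2+q+1$---a one-line discriminant or rational-root check shows none has an integer root. Consequently every class below the midrule occurs for every prime power $q$, in agreement with the empty third column of \cref{point-count-by-conjugacy-class}.

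For the classes above the midrule the same method applies. The only linear factors capable of vanishing at a prime power are $(q-2)$, $(q-3)$ and $(q-5)$, which furnish the candidate exceptional values $q = 2, 3, 5$. The remaining factors are the irreducible higher-degree polynomials $q^2-2q-7$, $q^2+q-3$, $q^3-q^2-2q-6$, $q^2+q-1$, $q^2+q+2$ and $q^3-2q^2+2q-3$ (alongside the recurring $q^2-q+1$ and $q^2+1$, which have negative discriminant); for each of these I would run a short rational-root test to confirm there is no prime-power root. Recording which of $(q-2)$, $(q-3)$, $(q-5)$ divide each $P_c$ then reproduces the third column of \cref{point-count-by-conjugacy-class} exactly, and this column is precisely the list of exceptions in the statement.

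The one point demanding care is purely bookkeeping: one must match each conjugacy-class label in the statement to its row in the table, using the notation of \cref{conjugacy-class-notation}, and verify that the cubic factors $q^3-q^2-2q-6$ and $q^3-2q^2+2q-3$ are indeed irreducible over $\QQ$ with no integer root. Since \cref{point-count-by-conjugacy-class} has already carried out the factorizations and tabulated the vanishing loci, the corollary is in the end a transcription of that third column, and no input beyond \cref{point-count-theorem} is needed.
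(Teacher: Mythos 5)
Your proposal is correct and is essentially the paper's own (implicit) argument: the corollary is read off from \cref{point-count-theorem} by determining the prime-power roots of each polynomial in \cref{point-count-by-conjugacy-class}, exactly as you describe. One small caveat: your method reproduces the table's third column verbatim, so the $q=2$ exception you derive is the class $(1^2,2^{-2},4^2)$ (polynomial $(q+1)^3(q-2)$), whereas the corollary as printed lists $(1^{-2},2^2,4)$ (whose polynomial $q(q-1)^3$ never vanishes at a prime power) --- an apparent typo in the statement rather than a gap in your proof.
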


Just like the number of lines, the intersection pattern of pairs of lines is fixed over $\overline{\FF}_q$.
Once we know how $\Frob_{q}$ acts on the set of lines on $S$, we can determine how many sets of lines with a given intersection pattern is fixed by $\Frob_{q}$ (and hence is defined over $\FF_q$).
Since we know the distribution of conjugacy classes of $\Frob_q$, this allows us to compute the distribution of that intersection pattern over all $S$ defined over $\FF_q$.
Similarly, we can also find the distribution of number of $\FF_q$ points on $\Sym^n S$ or $\UConf^n S$.
Two such examples are listed in \cref{tritangents-table} (tritangents), and \cref{pairs-table} (unordered pairs of points).

The paper of Bergvall--Gounelas computes the $\fS_6$-equivariant point counts of smooth cubic surfaces \cite[Table~1]{BG2019}, i.e. the analogue of \cref{point-count-by-conjugacy-class} for $\fS_6 < W(E_6)$.
This determines $H^*_{\et}(X(\overline{\FF}_q; \QQ_\ell)$ (see \cref{definitions,cohomology-of-Y}) as $\fS_6$ representations and leaves finitely many possibilities as $W(E_6)$ representations.
They then use various constraints on the $W(E_6)$-equivariant point counts (for example that each entry in \cref{point-count-by-conjugacy-class} must be non-negative for each $q$) to reduce to a single possibility.

Our paper fully explores the point-count implications of this computation.
Further, we use their computation of cohomology, along with some facts from \cite{Das2019} to compute the singular cohomology of the moduli space of smooth cubic surfaces with certain marked patterns of lines and points as $W(E_6)$ representations.
For the precise statement, see \cref{cohomology-theorem} and the preceding definitions.

\subsection{Acknowledgements}
I would like to thank Benson Farb for suggesting the project, for guidance and numerous helpful comments throughout the composition of this document.
Thanks to Nir Gadish for pointing out an error in an earlier version of \cref{cohomology-theorem}.
I must also thank Nir, and Ben O'Connor, for helping me understand the Grothendieck--Lefschetz trace formula for twisted coefficients.
Thanks to Nate Harman and Nat Mayer for generally helpful conversations.
I was supported by the Jump Trading Mathlab Research Fund.

\section{Marking points and lines on cubic surfaces}
Denote the \emph{ordered configuration space} of $n$ points on a space $X$ by
\[\PConf^n X \vcentcolon= \set{(x_1, \dots, x_n) \in X^n}{x_i \ne x_j \text{ for } i \ne j} \dispunct.\]
The \emph{unordered configuration space} of $n$ points on $X$ is the quotient by the action of the symmetric group $\fS_n$ permuting the $n$ points, and will be denoted by
\[\UConf^n X \vcentcolon= \PConf^n X/\fS_n \dispunct.\]

\label{definitions}
A cubic surface $S \subset \PP^3$ is defined by a homogeneous cubic polynomial $F$ in $4$ variables.
Accordingly, the space of cubic surfaces is the projectivization $\PP^{19}$ of the affine space of homogeneous cubic polynomials in $4$ variables.
The cubic surface $S$ is \emph{smooth} if its defining polynomial $F$ is smooth, i.e. its partials do not simultaneously vanish on $S$.

The subspace of non-smooth or \emph{singular} cubic surfaces is a closed set $\Sigma$, given by a `discriminant' polynomial (with integer coefficients) in the coefficients of $F$.
We will denote the complement, the space of smooth cubic surfaces, by
\[X = \PP^{19} \setminus \Sigma \dispunct.\]
Thus $X$ is an algebraic variety defined over $\ZZ$.

According to the classical theorem of Cayley and Salmon, a smooth cubic surface over $\CC$ contains $27$ lines.
This corresponds to a degree $27$ cover of $X(\CC)$ given by the incidence variety of lines $\ell \in \Gr(2,4)$ (the Grassmannian of lines in $\PP^3$) and cubic surfaces $S \in X$ (see \cite{Das2018}).
This cover is not Galois; its Galois group is $W(E_6)$, the Weyl group of type $E_6$ (see \cite{Manin1986,Harris1979}).
More precisely, this is the subgroup of $\fS_{27}$ permuting the lines that preserves the set of intersecting pairs of lines.
Explicitly, let $R \subset \{1,\dots,27\}^2$ be the set of intersecting pairs of lines for some fixed $S_0 \in X$.
Then the Galois cover is
\[Y = \set{(S, L_1, \dots, L_{27})}{L_i \subset S, \; L_i \cap L_j \ne \emptyset \text{ iff } (i,j) \in R} \subset X \times \PConf^{27}\Gr(2,4) \dispunct.\]
Note that $Y$ is an algebraic variety over $X$ and, by above, $Y(\CC) \to X(\CC)$ is a Galois cover with Galois group $W(E_6)\cong \Stab(R) < \fS_{27}$.
This finite subgroup is of order $51840$ and has a simple subgroup of index $2$; we describe some of its representation theory in \cref{character-table-section}.

There are also actions of $\PGL(4) = \Aut(\PP^3)$ on $X$ and on $Y$ such that the covering map is equivariant.
Both the actions over $\CC$ have closed orbits and finite stabilizers, which are subgroups of $W(E_6)$.

In the following, the summands on the right denote irreducible representations of $W(E_6)$, for details and the notation see \cref{irrep-notation,character-table}.
In particular, $V_d$ or $U_d$ has dimension $d$ and $V_1$ is the trivial representation.

\begin{theorem}[{\cite[Theorem~1.2]{BG2019}}]
\label{cohomology-of-Y}
Let $H^i = H^i(Y(\CC)/\PGL(4,\CC); \QQ)$.
Then as representations of $W(E_6)$:
\begin{align*}
H^0 &= V_1 \dispunct;\\
H^1 &= V_{15,2}\dispunct;\\
H^2 &= V_{81}\dispunct;\\
H^3 &= V_{15,1} \dsum U_{80} \dsum U_{90}\dispunct;\\
H^4 &= V_{30} \dsum V_{30}' \dsum U_{10} \dsum U_{80} \dispunct.
\end{align*}
For dimensional reasons, $H^i = 0$ for other $i$.
\end{theorem}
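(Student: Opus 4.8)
The plan is to compute these groups by equivariant $\FF_q$-point counting, exploiting that $Y/\PGL(4)$ is a $4$-dimensional moduli space whose cohomology is of Tate type. First I would make the $W(E_6)$-action computationally accessible through the classical presentation of a smooth cubic surface as $\PP^2$ blown up at $6$ points in general position. This identifies (a finite cover of) $X/\PGL(4)$ with the moduli of $6$ points in $\PP^2$ modulo $\PGL(3)$, of dimension $2\cdot 6 - 8 = 4$, on which the point-permuting subgroup $\fS_6 < W(E_6)$ acts; the $27$ lines become the $6$ exceptional curves, the $15$ lines through pairs of points, and the $6$ conics through five of them, so that the full symmetry group of this configuration is $W(E_6)$. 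Since $Y/\PGL(4) \to X/\PGL(4)$ is Galois with group $W(E_6)$, the $\rho$-isotypic part of $H^i(Y/\PGL(4))$ is $H^i(X/\PGL(4); \cL_\rho) \otimes \rho$, so it suffices to understand the cohomology with coefficients in the local systems $\cL_\rho$ attached to the irreducible representations $\rho$.

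Next I would run the Grothendieck--Lefschetz trace formula equivariantly. The spaces $Y/\PGL(4)$ and $X/\PGL(4)$ are smooth affine orbifolds of dimension $4$ defined over $\ZZ$ (the moduli of smooth cubic surfaces is the complement of the ample discriminant divisor in the $4$-dimensional GIT quotient), so Artin vanishing gives $H^i = 0$ for $i > 4$ and connectedness gives $H^0 = V_1$. I would then establish that the cohomology is pure of Tate type, with $H^i$ of weight $2i$; granting this, for each $\sigma \in \fS_6$ the $\fS_6$-equivariant count is a polynomial of degree $4$ in $q$ whose coefficient of $q^{4-i}$ is $(-1)^i \chi_{H^i}(\sigma)$ (using Poincaré duality on the smooth $4$-fold to pass between $H^i_c$ and $H^i$). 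Counting configurations of $6$ points in $\PP^2(\FF_q)$ in general position, stratified by the cycle type by which $\Frob_q$ permutes the points, computes these polynomials explicitly and hence determines $H^1, \dots, H^4$ as $\fS_6$-representations. This is the computation carried out by Bergvall--Gounelas.

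Finally I would upgrade the answer from $\fS_6$ to $W(E_6)$, which I expect to be the main obstacle. The geometry and the point counts see only the $\fS_6$-structure, and restriction of characters along $\fS_6 < W(E_6)$ is far from injective, so the $\fS_6$-data leaves several a priori candidate decompositions of each $H^i$ into the $25$ irreducible $W(E_6)$-representations. To single out the correct one I would impose the full $W(E_6)$-equivariant constraints: for every $\gamma \in W(E_6)$ and every prime power $q$, the class function $\sum_i (-1)^i \chi_{H^i}(\gamma)\, q^{4-i}$ must be a nonnegative integer, since it equals the orbit count $\#\{S : \Frob_{q,S} \sim \gamma\}/\#\PGL(4,\FF_q)$ (these are exactly the normalized quantities recorded in \cref{point-count-by-conjugacy-class}). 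Matching dimensions, decomposing the restrictions of the candidate irreducibles, and enforcing this positivity for all $q$ pins down a unique decomposition. The genuinely delicate points are this representation-theoretic bookkeeping over all of $W(E_6)$ and, underlying everything, the verification that the cohomology really is pure of Tate type with $H^i$ of weight $2i$, which is what licenses reading the Betti characters in the statement off the polynomial point counts.
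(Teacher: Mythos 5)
Your outline is essentially the argument of Bergvall--Gounelas, which this paper does not reprove but imports as \cite[Theorem~1.2]{BG2019} and summarizes in the introduction in exactly the terms you use: $\fS_6$-equivariant point counts of $6$ general points in $\PP^2$, minimal purity to read off Betti characters from the polynomial counts, and non-negativity of the $W(E_6)$-equivariant counts to cut the finitely many candidate $W(E_6)$-decompositions down to one. So your proposal matches the paper's (cited) proof in approach; the only caveat is that the heavy lifting --- the general-position count and the verification of minimal purity --- is asserted rather than carried out, just as in the paper's summary.
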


This is sufficient to determine $H^*(Y(\CC); \QQ)$ as a $W(E_6)$ representation since it follows from the work of Peters and Steenbrink \cite{PS2003} that
\[H^*(Y(\CC); \QQ) \cong H^*(Y(\CC)/\PGL(4,\CC); \QQ) \tensor H^*(\PGL(4, \CC); \QQ) \dispunct,\]
where the action on $\PGL(4,\CC)$ (and its cohomology) is trivial.

A subgroup $G < W(E_6)$ corresponds to an intermediate cover $Y/G \to X$.
These often also correspond to marking a (labeled) subset of lines on $S \in X$, when $G$ is the (pointwise) stabilizer of such a set.
For example, marking one line $L$ corresponds to $\Stab(L) \cong W(D_5)$ (see \cite{Das2018}), and marking a `tritangent' $T$ corresponds to $\Stab(T) \cong W(F_4)$ (see \cite{Naruki1982}).
Note that $Y = Y/\{1\}$ and $X = Y/W(E_6)$ are the `trivial' examples.

We denote the \emph{incidence variety} of points in $\PP^3$ and $S \in X$ by
\[U \vcentcolon= \set{(S,p)}{p \in S} \subset X \times \PP^3 \dispunct.\]
Then $U \to X$ is the `universal family' of smooth cubic surfaces and $U(\CC)$ is a fiber bundle over $X(\CC)$ with fiber $S \subset \PP^3(\CC)$ over $S \in X(\CC)$ (see \cite{Das2019}).
We can also construct various associated spaces over $X$: $\Pi^n_X U$ with fiber $S^n$, $\Sym^n_X U$ with fiber $\Sym^n S$, $\PConf^n_X U$ with fiber $\PConf^n S$, etc.

Finally, we can combine the two constructions above by taking more fiber products over $X$, for instance:
\[(Y/W(D_5)) \times_X (\Pi^2_X U) = \set{(S, L, p_1, p_2)}{L \subset S, p_i \in S} \subset X \times \Gr(2,4) \times (\PP^3)^2 \dispunct.\]
Of course,
\[(Y/G) \times_X U = (Y \times_X U)/G \dispunct,\]
where the action of $G \subset W(E_6)$ on $U$ is trivial, and similarly for $(Y/G) \times_X (\Pi^n_X U)$ etc.
It is worth noting that in these examples there is no enforced relation between the points and the lines marked, in particular we do not insist that $p_i \in L$.

\begin{remark}
We should not expect the results and techniques of this paper to apply if we do insist e.g.\ that $p_i \in L$.
For one, the space
\[\set{(S, L, p)}{p \in L \subset S} \subset (X \times \Gr(2,4) \times \PP^3)(\CC)\]
is \emph{not} a fiber bundle over $X(\CC)$ (due to the existence of \emph{Eckardt points}, i.e. triple intersections of lines, on some special $S \in X$).
\end{remark}

The bundles $U \to X$ and the associated constructions each have monodromy: $H^*(S)$ is a $\pi_1(X,S)$ representation.
However, the monodromy representation factors through $W(E_6)$ (see \cref{surface-cohomology} for an explicit description of the representation) and consequently in the pullback bundle $Y \times_X U \to Y$ the monodromy action of $\pi_1(Y)$ on $H^*(S)$ is trivial.
The same holds for $Y \times_X (\Pi^n_X U)$ etc.

\begin{theorem} \label{cohomology-theorem}
Let $Z$ be $\Pi^n_X U$ or $\Sym^n_X U$.
Let $F$ be the fiber of $Z(\CC) \to X(\CC)$ over $S$ (i.e.\ $S^n$ or $\Sym^n S$ respectively).
Then
\[H^*((Y \times_X Z)(\CC); \QQ) \cong H^*(Y(\CC); \QQ) \tensor H^*(F; \QQ)\]
as both $W(E_6)$-representations and mixed Hodge structures.
\end{theorem}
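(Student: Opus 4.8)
\emph{Proof proposal.} The plan is to view the projection $Y \times_X Z \to Y$ as a fiber bundle with fiber $F$ and \emph{trivial} monodromy---which is exactly the content of the remarks preceding the statement, since the image of $\pi_1(Y)$ in $W(E_6)$ is trivial---and then to apply the Leray--Hirsch theorem. The task is thus to produce global cohomology classes on $Y \times_X Z$ restricting to a basis of $H^*(F;\QQ)$ on every fiber. The natural source is the marking carried by $Y$: for each of the $27$ lines the incidence locus $D_i = \set{(y,p)}{p \in L_i(y)} \subset Y \times_X U$ is an algebraic divisor whose class $\lambda_i \in H^2(Y \times_X U)$ restricts to the line class $[L_i] \in H^2(S)$ on each fiber.

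First I would settle the case $Z = U$. A smooth cubic surface is $\PP^2$ blown up at six points, so $H^*(S;\QQ)$ is concentrated in even degrees with $H^0 = \QQ$, $H^2 \cong \QQ^7$ spanned by the line classes, and $H^4 = \QQ$ generated by the point class $[L_i]\cdot[L_j]$ of any intersecting pair. Hence $1$, a suitable seven of the $\lambda_i$, and one product $\lambda_a\lambda_b$ restrict to a basis of $H^*(S)$, and Leray--Hirsch gives $H^*(Y \times_X U) \cong H^*(Y)\tensor H^*(S)$. For $Z = \Pi^n_X U$ I would pull the $\lambda_i$ back along the $n$ projections $Y \times_X \Pi^n_X U \to Y \times_X U$; by Künneth on the fiber $S^n$ their products restrict to a basis of $H^*(S^n)$, so Leray--Hirsch applies verbatim. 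The symmetric case descends from this: since $Y \times_X \Sym^n_X U = (Y \times_X \Pi^n_X U)/\fS_n$ and we work over $\QQ$, taking $\fS_n$-invariants yields $H^*(Y \times_X \Sym^n_X U) = H^*(Y)\tensor (H^*(S^n))^{\fS_n} = H^*(Y)\tensor H^*(\Sym^n S)$, as $\fS_n$ acts trivially on the $H^*(Y)$ factor.

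It remains to upgrade this vector-space isomorphism to one of $W(E_6)$-representations and mixed Hodge structures, which is where the real work lies. The Leray--Hirsch isomorphism is induced by a splitting $s \from H^*(F) \to H^*(Y \times_X Z)$ of the fiber-restriction map via $a \tensor b \mapsto p^*a \cup s(b)$. The extension classes $\lambda_i$ are algebraic, hence Hodge classes of type $(1,1)$; and because $S$, $S^n$, and (rationally) $\Sym^n S$ are smooth projective, each $H^q(F)$ is pure of weight $q$. One can therefore choose $s$ to send every fiber class to a global class of the same Hodge type, making $s$ a morphism of mixed Hodge structures. For $W(E_6)$-equivariance I would then average: the deck group acts everywhere by algebraic automorphisms (hence by automorphisms of mixed Hodge structures), the restriction map is $W(E_6)$-equivariant for the monodromy action on $H^*(F)$, and since $W(E_6)$ is finite and we are over $\QQ$, replacing $s$ by $\tfrac{1}{\# W(E_6)}\sum_{g} g\,s\,g^{-1}$ gives a splitting that is simultaneously equivariant and a morphism of mixed Hodge structures.

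The main obstacle is exactly this simultaneity---arranging the Leray--Hirsch splitting to respect both structures at once---and the averaging argument resolves it cleanly provided the $W(E_6)$-action is by Hodge-structure automorphisms (true, as the deck transformations are algebraic) and the monodromy action on $H^*(F)$ is correctly identified with the standard $W(E_6)$-action on $H^*(S)$, extended to $S^n$ or $\Sym^n S$; this identification is what recovers the tensor factor $H^*(F)$ as the asserted representation. A secondary point needing care is the $\Sym^n$ descent, where I must check that passing to $\fS_n$-invariants commutes with both the $W(E_6)$-action and the Hodge filtration---both immediate, since the two group actions commute and invariants are exact with $\QQ$ coefficients.
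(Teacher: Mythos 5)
Your proposal is correct, but it takes a genuinely different route from the paper's. The paper runs the Serre spectral sequence of $Y \times_X Z \to Y$ (trivial monodromy gives $E_2^{p,q} = H^p(Y)\tensor H^q(F)$), reduces to $n=1$ via the Leibniz rule, and kills the differentials out of $H^2(S) \cong V_1 \dsum V_6$ in two steps: the $V_6$-summand dies for purely representation-theoretic reasons (no copy of $V_6$ in the possible targets $E^{2,1}=0$ and $E^{3,0}\subset H^3(Y)$), while the invariant summand is handled by transfer to the bundle $U \to X$ together with a citation of Corollary~1.5 of \cite{Das2019}. You instead exhibit explicit algebraic classes --- the incidence divisors of the $27$ globally marked lines --- whose restrictions span $H^2$ of each fiber (with products giving $H^4$), and invoke Leray--Hirsch; degeneration is then automatic, with no character computation and no external input. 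Your route also treats the two extra claims in the statement more transparently: the paper's degeneration argument a priori identifies only the associated graded of the Leray filtration, whereas your averaged splitting $s$, built from algebraic (hence Hodge--Tate, weight-pure) classes permuted by the deck group, yields an honest bijective morphism of mixed Hodge structures and of $W(E_6)$-representations, hence an isomorphism of both at once. What the paper's approach buys is brevity --- it reuses the already-established degeneration for $U \to X$ and avoids the (routine but necessary) check that $[D_i]$ restricts to $[L_i]$ on every fiber; your approach in effect reproves the invariant-part input of \cite{Das2019} by producing the relevant algebraic class directly. Both treatments handle $\Sym^n$ identically, by $\fS_n$-transfer from $\Pi^n$.
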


\begin{proof}
We will suppress the field $\CC$ for brevity.
In the quotient $\Pi^n_X U \to \Sym^n_X U$, the $\fS_n$ action on $Y$ and hence $H^*(Y)$ is trivial.
Thus by transfer, it is enough to restrict to the cases $\Pi^n$.

Since we noted the monodromy $\pi_1(Y) \from H^*(F)$ is trivial, the associated Serre spectral sequence converging to $H^*(Y \times_X Z; \QQ)$ has $E_2$ page
\[E_2^{p,q} \cong H^p(Y; \QQ) \tensor H^q(F; \QQ) \dispunct.\]
It is then enough to show that this spectral sequence degenerates immediately.
Since $H^*(F) = H^*(S^n) \cong H^*(S)^{\tensor n}$, and the differentials satisfy the Leibniz rule, we reduce to the $n = 1$ case.
Note that the differentials in these spectral sequences must be $W(E_6)$-equivariant, since the respective bundles are.

To describe $H^*(S)$ as a $W(E_6)$-representation, identify $S$ as the blowup of $\PP^2$ at $6$ points, with the exceptional divisors constituting $6$ of the $27$ lines.
It follows that $H^2(S)$ is generated by the canonical class of $S$ and the classes of these $6$ lines, and implies that as $W(E_6)$ representations,
\begin{equation}\label{surface-cohomology}
H^2(S; \QQ) \cong V_1 \dsum V_6 \dispunct.
\end{equation}
But there is no copy of the irreducible fundamental representation $V_6$ in $E^{2,1} = 0$ or $E^{3,0} \cong H^3(Y; \QQ)$.
Thus it remains to show that the differentials vanish on the $W(E_6)$-invariant classes in $H^*(S)$, or equivalently (by transfer) that the differentials in the Serre spectral sequence for the bundle $U \to X$ vanish.
The latter is shown in the proof of Corollary~1.5 in \cite{Das2019}.
\end{proof}

\begin{corollary}
For $Z$ and $F$ as above, and $G < W(E_6)$,
\[H^*((Y/G) \times_X Z) \cong (H^*(Y(\CC); \QQ) \tensor H^*(F; \QQ))^G \dispunct.\]
In particular, taking $G = W(E_6)$,
\[H^*(Z) \cong (H^*(Y(\CC); \QQ) \tensor H^*(F; \QQ))^{W(E_6)} = H^*(Y(\CC); \QQ) \tensor_{W(E_6)} H^*(F; \QQ) \dispunct.\]
\end{corollary}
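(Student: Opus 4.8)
The plan is to deduce this corollary from \cref{cohomology-theorem} by a transfer (averaging) argument over $G$. The starting observation is the identity recorded just above the statement, $(Y/G) \times_X Z = (Y \times_X Z)/G$, where $G \subset W(E_6)$ acts on $W \vcentcolon= (Y \times_X Z)(\CC)$ through its action on the Galois cover $Y(\CC) \to X(\CC)$ and trivially on $Z$. Since $Y(\CC) \to X(\CC)$ is a $W(E_6)$-Galois covering space, $G$ acts freely on $Y(\CC)$, hence on $W$, and the quotient is exactly $W/G = ((Y/G) \times_X Z)(\CC)$.

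First I would invoke the transfer isomorphism for a finite group acting on a nice space with rational coefficients: the quotient map induces
\[H^*(W/G; \QQ) \isom H^*(W; \QQ)^G \dispunct.\]
This is compatible with the mixed Hodge structure, since $G$ acts by algebraic automorphisms (so preserves the weight and Hodge filtrations) and the averaging projector $\tfrac{1}{\# G}\sum_{g \in G} g^*$ is then a morphism of mixed Hodge structures. Next I would feed in \cref{cohomology-theorem}, which supplies a $W(E_6)$-equivariant isomorphism $H^*(W; \QQ) \cong H^*(Y(\CC); \QQ) \tensor H^*(F; \QQ)$ of mixed Hodge structures. Restricting the equivariance to the subgroup $G$ and taking $G$-invariants of both sides yields
\[H^*((Y/G) \times_X Z) \cong \parenth{H^*(Y(\CC); \QQ) \tensor H^*(F; \QQ)}^G \dispunct,\]
which is the first assertion.

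For the specialization $G = W(E_6)$, observe that $Y/W(E_6) = X$, so $(Y/W(E_6)) \times_X Z = X \times_X Z = Z$, giving $H^*(Z) \cong (H^*(Y(\CC); \QQ) \tensor H^*(F; \QQ))^{W(E_6)}$. The final rewriting as $\tensor_{W(E_6)}$ is the standard fact that, for a finite group in characteristic zero, the invariants of a diagonal tensor product agree with its coinvariants via the norm map, and the coinvariants are computed by $-\tensor_{\QQ[W(E_6)]}-$; explicitly, $(A \tensor B)_{W(E_6)} = A \tensor_{\QQ[W(E_6)]} B$ once one of the factors is regarded as a right module in the usual way.

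I expect no serious obstacle: the mathematical content is entirely in \cref{cohomology-theorem}, and the remaining steps are formal. The two points that need care are (i) confirming that the transfer isomorphism holds in the category of mixed Hodge structures and not merely of graded vector spaces, which follows because $G$ acts by algebraic maps; and (ii) noting that for non-normal $G$ there is no residual $W(E_6)$-action on $H^*((Y/G) \times_X Z)$, so the isomorphism is claimed only as graded mixed Hodge structures rather than equivariantly.
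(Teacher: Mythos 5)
Your argument is correct and is exactly the (unwritten) argument the paper intends: the paper states this corollary without proof as an immediate consequence of \cref{cohomology-theorem} via the identity $(Y/G)\times_X Z = (Y\times_X Z)/G$ and the rational transfer isomorphism for the free $G$-action on the cover. Your additional care about the mixed Hodge structure and the invariants-equal-coinvariants rewriting fills in precisely the routine details the paper omits.
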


Thus these computations reduce to elementary representation theory, namely character theory.
For convenience, a character table of $W(E_6)$ is reproduced in \cref{character-table}.

\subsection{The Grothendieck--Lefschetz trace formula and point counts}
\label{twisted-point-counts}
The varieties above are all smooth and quasiprojective.
Therefore their points counts over the finite field $\FF_q$ can be obtained from the action of $\Frob_q$ on their étale cohomology via the Grothendieck--Lefschetz trace formula:
\[\#Z(\FF_q) = q^{\dim Z} \sum_{i \ge 0} (-1)^i \Tr(\Frob_q \from H^{i}_{\text{ét}}(Z(\overline{\FF}_q); \QQ_\ell)^\vee) \dispunct.\]
This formula holds for a smooth $Z$ and a prime $\ell$ not dividing $q$.

The Cayley--Salmon theorem holds over any algebraically closed field, in particular $\overline{\FF}_q$.
The identification of the Galois group of $Y \to X$ with $W(E_6)$ implies that if $S$ is defined over $\FF_q$, then $\Frob_q$ permutes the $27$ lines by some element of $W(E_6) < \fS_{27}$.
We denote this by $\Frob_{q,S}$.
This also determines how $\Frob_q$ acts on $H^*_{\text{ét}}(S)$ as follows.

Recall that a smooth cubic surface $S$ is the blowup of $\PP^2$ at $6$ points and
\begin{equation}
H^2(S; \QQ) \cong V_1 \dsum V_6 \dispunct.
\end{equation}
Since $\Frob_q$ acts on $H^{2i}_{\text{ét}}(\PP^n(\overline{\FF}_q))$ by the scalar $q^{-i}$, we deduce that $\Frob_q$ acts on $H^0(S)$ as identity, on $H^2(S)$ by the action of $q^{-1} \cdot \Frob_{q,S} \in \QQ[W(E_6)]$ on $V_1 \dsum V_6$ and on $H^4(S)$ by the scalar $q^{-2}$.
This also proves \cref{point-count-by-point-count-corollary}.
It remains to prove \cref{point-count-theorem}.

\begin{remark}\label{fiber-constructions-determined-by-conjugacy}
The action of $\Frob_q$ on $H^*(S)$ determines the action of $\Frob_q$ on $H^*(S^n)$ and $H^*(\Sym^n)$.
Totaro \cite{Totaro1996} describes $H^*(\PConf^n S)$ as the cohomology of a DGA generated by $H^*(S^n)$ along with classes in degree $3$ which can be identified with the generator of $H^3(\CC^2 - 0)$.
This is enough to determine the action of $\Frob_q$ on $H^*(\PConf^n S)$ and, since this description is $\fS_n$ equivariant, on $H^*(\UConf^n S)$.
In particular, the conjugacy class of $\Frob_{q,S}$ determines the number of $\FF_q$ points on $S^n$, $\Sym^n S$, etc, again via the Grothendieck--Lefschetz trace formula.
\end{remark}

\begin{proof}[{Proof of \cref{point-count-theorem}}]
We want to count points of $X$ depending on the conjugacy class of $\Frob_q$ in $W(E_6)$.
Thus we will apply a version of the Grothendieck--Lefschetz trace formula with local coefficients (see e.g. \cite[\S3]{DL1976}), specifically for representations of $\pi_1(X)$ that factor through the finite group $W(E_6)$.
By transfer, this exactly corresponds to the cohomology of $Y$; more explicitly, for a $W(E_6)$-representation $V$,
\[H^*(X; V) \cong H^*(Y; \QQ) \tensor_{\QQ[W(E_6)]} V \dispunct.\]
To combine this with the Grothendieck--Lefschetz trace formula, we need knowledge of how $W(E_6)$ and $\Frob_q$ acts on $H_{\text{ét}}^*(Y)$.

Even though \cref{cohomology-of-Y} is stated for singular cohomology of the complex points, the same results hold (after tensoring with $\QQ_\ell$) for $H^*_{\text{ét}}(Y(\FF_q) ; \QQ_\ell)$ for every $q$.
Further, it is an important part of the results of Bergvall--Gounelas in \cite{BG2019} that $H^*(Y/\PGL(4))$ is \emph{minimally pure}, i.e.\ that $\Frob_q$ acts on $H^i_{\text{ét}}$ by the scalar $q^{-i}$.
In fact, the argument in \cite{BG2019} in some sense reverses the steps here, to go from point count to etalé cohomology to singular cohomology.

Equipped with this knowledge and using some linear algebra, one obtains
\[\# \set{S \in X}{\Frob_{q,S} \in c} = \#\PGL(4,\FF_q) \sum_{i \ge 0} (-1)^i q^{4-i} \inner{H^i}{\chi_c}_{W(E_6)} \]
where $\chi_c$ is the characteristic function of the conjugacy class $c$ and $H^i$ are as in \cref{cohomology-of-Y}.
This formula is enough to compute \cref{point-count-by-conjugacy-class} and also explains the normalizing factors in its second column.
\end{proof}

We can also state a point count version of \cref{cohomology-theorem}.
We already saw in \cref{fiber-constructions-determined-by-conjugacy} that $\# S^n(\FF_q)$, $\# \PConf^n S(\FF_q)$ etc only depend on $\Frob_{q,S}$.
More elementary is the fact that the cardinality of the fiber of $Y/G \to X$ over $S \in X(\FF_q)$ is determined by $\Frob_{q,S}$ for any $G < W(E_6)$, since this fiber is isomorphic to $W(E_6)/G$ as a $G$-set.
Thus we get the following statement whose proof is obvious.
\begin{proposition} \label{bundle-point-count-theorem}
Let $Z$ be $\Pi^n_X U$, $\Sym^n_X U$, $\PConf^n_X U$ or $\UConf^n_X U$ and $G < W(E_6)$.
Let $c$ be any conjugacy class of $W(E_6)$ and set $X_c = \set{S \in X(\FF_q)}{\Frob_{q,S} \in c}$.
For any $S \in X_c$, let $d = \# (Y/G)_S$ and let $F$ be the fiber of $Z \to X$ over $S$ (i.e.\ $S^n$, $\Sym^n S$, $\PConf^n S$ or $\UConf^n S$ respectively).
Then
\[\# \big[((Y \times_X Z)/G)(\FF_q) \times_{X(\FF_q)} X_c\big] = d \times (\# F) \times (\# X_c) \dispunct.\]
\end{proposition}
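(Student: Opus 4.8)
The plan is to reduce the claimed identity to an entirely fiberwise count over the base $X$, with no appeal to the trace formula. Since the left-hand side counts exactly those $\FF_q$-points of $(Y \times_X Z)/G$ whose image in $X(\FF_q)$ lies in $X_c$, I would first partition this set by its image, writing
\[\#\big[((Y \times_X Z)/G)(\FF_q) \times_{X(\FF_q)} X_c\big] = \sum_{S \in X_c} \#\,\big(((Y \times_X Z)/G)_S\big)(\FF_q),\]
where $((Y \times_X Z)/G)_S$ is the scheme-theoretic fiber over $S$, a scheme over the residue field $\FF_q$.

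Next I would simplify each fiber. As noted in the excerpt, $G$ acts trivially on $Z$, so $(Y \times_X Z)/G = (Y/G) \times_X Z$, and the fiber over $S$ is the product of $\FF_q$-schemes $(Y/G)_S \times_{\Spec \FF_q} F$. Because $\Frob_q$ acts componentwise on this product, its fixed points are the product of the fixed points on each factor; equivalently, the $\FF_q$-points of a product of $\FF_q$-schemes factor as a product. Hence each summand equals $\#\big((Y/G)_S(\FF_q)\big) \cdot \#F$.

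Finally I would invoke the two constancy statements established just before the proposition. The geometric fiber of $Y/G \to X$ over $S$ is the coset space $W(E_6)/G$, on which $\Frob_q$ acts by left multiplication by $\Frob_{q,S}$; the number of rational points is therefore the number of $\Frob_{q,S}$-fixed cosets, which is a class function of $\Frob_{q,S}$ and so takes the constant value $d$ on all of $X_c$. Likewise $\#F$ depends only on $\Frob_{q,S}$ by \cref{fiber-constructions-determined-by-conjugacy}, hence is constant on $X_c$. Substituting these constants yields $\sum_{S \in X_c} d \cdot \#F = d \cdot (\#F) \cdot (\#X_c)$, as claimed. The computation really is routine; the only point that deserves care is the compatibility asserted in the second step --- that quotienting by $G$, forming the fiber product over $X$, and passing to $\FF_q$-points all commute as stated, so that the Frobenius action on the fiber genuinely splits as a product action. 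Once that is checked, nothing further is needed.
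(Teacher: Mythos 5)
Your argument is correct and matches the paper's intended reasoning: the paper declares the proof ``obvious'' after noting exactly the two constancy facts you invoke (that $\#F$ depends only on $\Frob_{q,S}$ by \cref{fiber-constructions-determined-by-conjugacy}, and that the fiber of $Y/G \to X$ over $S$ is $W(E_6)/G$ with Frobenius acting through $\Frob_{q,S}$), and your fiberwise decomposition and product splitting is just the natural way to spell that out.
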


Since we know the distribution of $\Frob_{q,S}$, i.e. $\#X_c$ for each $c$, we can find the distribution of $d \times \#F$.
Two examples of such distributions are tabulated in \cref{tritangents-table,pairs-table}.
Specifically, For \cref{pairs-table}, $H^*(\UConf^2(S); \QQ)$ as a $W(E_6)$ representation can be computed to be the following using the spectral sequence in \cite{Totaro1996}:
\[H^0 \cong V_1\dispunct; \qquad H^2 \cong V_1 \dsum V_6 \dispunct; \qquad H^4 \cong V_1^{\dsum 2} \dsum V_6 \dsum V_{20} \dispunct; \qquad H^i = 0 \text{ for other } i \dispunct.\]


\begin{table}[p]
\centering
\caption{Distribution of tritangents over $S \in X(\FF_q)$. The average is $1$.}
\label{tritangents-table}
\begin{tabular}{RR}
\toprule
N & \dfrac{\#W(E_6)}{\#\PGL(4,\FF_q)} \times \#\set{S}{\text{$S$ has $N$ tritangents}}\\\midrule
0 & 576(38q^3 - 5q^2 + 5)q \\
1 & 540(39q^4 + 3q^3 + 3q^2 - 3q - 10) \\
2 & 2160(q^2 - q + 1)(q + 1)q \\
3 & 240(17q^4 - 25q + 24) \\
4 & 1440(q^2 + q - 1)(q + 1)q \\
5 & 270(q + 1)^2(q - 2)(q - 3) \\
6 & 240(q^2 - q + 1)(q + 1)q \\
7 & 540(q^2 - 3)(q - 1)q \\
9 & 80(q^2 + q - 3)(q + 1)^2 \\
13 & 45(q^2 - 2q - 7)(q - 2)(q - 3) \\
15 & 36(q^2 - 4q + 5)(q - 1)q \\
45 & (q - 2)(q - 3)(q - 5)^2 \\
\bottomrule
\end{tabular}
\end{table}

\begin{table}[p]
\centering
\caption{Distribution of $\#\UConf^2(S)$ over $S \in X(\FF_q)$. The average is $q^2(q^2+q+2)$.}
\label{pairs-table}
\begin{tabular}{LR}
	\toprule
	N                  & \dfrac{\#W(E_6)}{\#\PGL(4,\FF_q)} \times \#\set{S}{\#\UConf^2(S) = N} \\ \midrule
	q^4 - 2q^3 + q^2   &                                              80(q + 1)^2(q^2 + q - 3) \\
	q^4 - q^3 + q^2    &                                                        2880q(q^3 - 3) \\
	q^4 - q^3 + 2q^2   &                                                         540q(q - 1)^3 \\
	q^4 - q^3 + 4q^2   &                                        45(q - 2)(q - 3)(q^2 - 2q - 7) \\
	q^4 + q^2          &                                          864(q + 1)(11q^3 - 6q^2 + 5) \\
	q^4 + 2q^2         &                                             2160q(q + 1)(q^2 - q + 1) \\
	q^4 + q^3 + q^2    &                                            960(11q^4 - 6q^3 + 5q + 6) \\
	q^4 + q^3 + 2q^2   &                                          3240(q + 1)(3q - 2)(q^2 + 1) \\
	q^4 + q^3 + 4q^2   &                                                  540q(q - 1)(q^2 - 3) \\
	q^4 + 2q^3 + q^2   &                                       720(q + 1)(q^3 - 2q^2 + 2q - 3) \\
	q^4 + 2q^3 + 2q^2  &                                             4320q(q - 1)(q^2 + q + 1) \\
	q^4 + 2q^3 + 3q^2  &                                                      5184q^2(q^2 + 1) \\
	q^4 + 2q^3 + 4q^2  &                                                               2880q^4 \\
	q^4 + 3q^3 + 4q^2  &                                                   540(q + 1)^3(q - 2) \\
	q^4 + 3q^3 + 6q^2  &                                                 1620q(q + 1)^2(q - 1) \\
	q^4 + 3q^3 + 8q^2  &                                            270(q + 1)^2(q - 2)(q - 3) \\
	q^4 + 4q^3 + 10q^2 &                                              240q(q + 1)(q^2 - q + 1) \\
	q^4 + 5q^3 + 16q^2 &                                              36q(q - 1)(q^2 - 4q + 5) \\
	q^4 + 7q^3 + 28q^2 &                                               (q - 2)(q - 3)(q - 5)^2 \\ \bottomrule
\end{tabular}
\end{table}

\begin{remark}
	It is possible to deduce \cref{bundle-point-count-theorem} by applying the trace formula to \cref{cohomology-theorem} for $Z = \Pi^n_X U$ or $\Sym^n_X U$, and to the Serre spectral sequence of the bundle $Y \times_X Z \to Y$ for $Z = \PConf^n_X U$ or $Z = \UConf^n_X U$.
	The differentials in this spectral sequence are irrelevant for this, since the trace formula uses an alternating sum of traces, similar to the Euler characteristic.
	We leave the details to experts.
\end{remark}

\section{Character table of \texorpdfstring{$W(E_6)$}{W(E6)}}
\label{conjugacy-class-notation}
\label{character-table-section}
In this section we explain our notation for the conjugacy classes and irreducible representations, borrowing heavily from \cite{Frame1951}.
The conjugacy classes will be denoted by ``virtual cycle types'' as determined by their action on the $6$-dimensional \emph{fundamental representation} $V_6$.
In more detail, elements from different conjugacy classes in $W(E_6)$ remain non-conjugate in the representation $V_6$.
Thus a conjugacy class $c$ can be identified by the action of any $g \in c$ on $V_6$, and since this action of $\{1, g, g^2, \dots\} \cong \ZZ/n$ (i.e.\ $n$ is the order of $g$) is defined over $\QQ$, it can be decomposed as a (virtual) direct sum
\[\dSum_{d \divides n} \QQ[\ZZ/d]^{\dsum i_d} \dispunct.\]
Then we will denote the conjugacy class $c$ by the tuple $(d^{i_d})_{i_d \ne 0}$.
For example, the tuple
\[(1, 2^{-2}, 3^{-1}, 6^2)\]
denotes a conjugacy class of order $6$ whose elements act on $V_6$ with eigenvalues $(\zeta, \zeta, \zeta^2, \zeta^4, \zeta^5, \zeta^5)$, where $\zeta$ is a primitive $6$th root of unity.
We assure the reader that the tuples below do in fact represent actual (i.e.\ not just virtual) representations of dimension $6$, in particular they satisfy $\sum di_d = 6$.

The group $W(E_6)$ contains a simple subgroup $W(E_6)^+$ of index $2$.
Call the conjugacy classes in $W(E_6)$ that are contained in $W(E_6)^+$ \emph{even}, the others \emph{odd}.
In particular, $(1^4,2)$ is odd.
Every irreducible character of $W(E_6)$ that does not remain irreducible upon restriction to $W(E_6)^+$ vanishes on odd conjugacy classes.

There are 5 such irreducible representations, and they have different dimensions, so we will denote the one of dimension $n$ by $U_n$.
On the other hand, every irreducible character $\chi$ of $W(E_6)$ that \emph{does} remain irreducible when restricted satisfies $\chi(1^4,2) \ne 0$.
These occur in 10 pairs, differing in the sign of the character on odd conjugacy classes (and hence by a tensor product with the sign representation $V_1'$ pulled back from the non-trivial representation of the quotient $W(E_6)/W(E_6)^+$).
Each pair is denoted $V$ and $V'$ with subscripts, where $V$ has $\chi(1^4,2) > 0$, and $V'$ has $\chi(1^4,2) < 0$.
Distinct pairs have different dimensions, except two of the pairs have dimension $15$.
Hence we will denote them by $V_n$, $V_{n}'$ for $n = \dim \ne 15$, and $V_{15,1}$, $V_{15,1}'$, $V_{15,2}$, $V_{15,2}'$.
A full list of the $25$ irreducible representations and the notation for them in other sources is listed in \cref{irrep-table}.
\label{irrep-notation}

Finally, we include a character table of $W(E_6)$ as \cref{character-table}.
To avoid redundancy, we omit the characters of the representations denoted $V_n'$, these can be obtained by taking the character of the corresponding character $V_n$ and negating the values on the odd conjugacy classes.

\begin{landscape}
	\setlength{\tabcolsep}{3pt}
\begin{table}
\begin{minipage}{0.6\linewidth-5pt}
\centering
\caption{Conjugacy classes of $W(E_6)$}
\begin{tabular}{l@{\hskip -1ex}cccRRR}
\toprule
& \multicolumn{3}{c}{Notation for the class $c$} & \multicolumn{3}{c}{Properties of $c$ and $g \in c$} \\
&    This paper    & \cite{CCN+1985} & \cite{SwinnertonDyer1967} & \ord g & \# c & \# Z(g) \\
\midrule
\multirow{15}{*}{\rotatebox[origin=c]{90}{\emph{even classes: $g \in W(E_6)^+$}}}
&             $(1^6)$             &      1A      & $C_1$ &      1 &    1 &                               51840 \\
&          $(1^2, 2^2)$           &      2B      & $C_2$ &      2 &  270 &                                 192 \\
&         $(1^{-2},2^4)$          &      2A      & $C_3$ &      2 &   45 &                                1152 \\
&            $(1^3,3)$            &      3D      & $C_6$ &      3 &  240 &                                 216 \\
&         $(1^{-3},3^3)$          &    3A, 3B    & $C_{11}$ &      3 &   80 &                                 648 \\
&             $(3^2)$             &      3C      & $C_9$ &      3 &  480 &                                 108 \\
&       $(1^2,2^{-2},4^2)$        &      4A      & $C_4$ &      4 &  540 &                                  96 \\
&             $(2,4)$             &      4B      & $C_5$ &      4 & 3240 &                                  16 \\
&             $(1,5)$             &      5A      & $C_{15}$ &      5 & 5184 &                                  10 \\
&        $(1,2,3^{-1},6)$         &    6C, 6D    & $C_7$ &      6 & 1440 &                                  36 \\
&        $(1^{-1},2^2,3)$         &      6F      & $C_8$ &      6 & 2160 &                                  24 \\
&         $(1^{-2},2,6)$          &      6E      & $C_{10}$ &      6 & 1440 &                                  36 \\
&     $(1,2^{-2},3^{-1},6^2)$     &    6A, 6B    & $C_{12}$ &      6 &  720 &                                  72 \\
&          $(3^{-1},9)$           &    9A, 9B    & $C_{14}$ &      9 & 5760 &                                   9 \\
& $(1^{-1},2,3,4^{-1},6^{-1},12)$ &   12A, 12B   & $C_{13}$ &     12 & 4320 &                                  12 \\ \midrule
\multirow{10}{*}{\rotatebox[origin=c]{90}{\emph{odd classes: $g \notin W(E_6)^+$}}}
&            $(1^4,2)$            &      2C      & $C_{16}$ &      2 &   36 &                                1440 \\
&             $(2^3)$             &      2D      & $C_{17}$ &      2 &  540 &                                  96 \\
&            $(1^2,4)$            &      4D      & $C_{18}$ &      4 & 1620 &                                  32 \\
&        $(1^{-2},2^2,4)$         &      4C      & $C_{19}$ &      4 &  540 &                                  96 \\
&            $(1,2,3)$            &      6G      & $C_{21}$ &      6 & 1440 &                                  36 \\
&        $(1^{-2},2,3^2)$         &      6H      & $C_{22}$ &      6 & 1440 &                                  36 \\
&              $(6)$              &      6I      & $C_{23}$ &      6 & 4320 &                                  12 \\
&         $(2,4^{-1},8)$          &      8A      & $C_{20}$ &      8 & 6480 &                                   8 \\
&         $(1^{-1},2,5)$          &     10A      & $C_{25}$ &     10 & 5184 &                                  10 \\
&     $(1,2^{-1},3^{-1},4,6)$     &     12C      & $C_{24}$ &     12 & 4320 &                                  12 \\ \bottomrule&
\end{tabular}
\end{minipage} \hfill \begin{minipage}{0.4\linewidth-5pt}
\caption{Various notations for irreducible representations of $W(E_6)$}
\label{irrep-table}
\centering
\begin{tabular}{cccc}
	\toprule
	This paper  & \cite{Frame1951} &    \cite{CCN+1985}    & \cite{Carter1993} \\ \midrule
	   $V_1$    &      $1_p$       &       $\chi_1$        &   $\phi_{1,0}$    \\
	   $V_6$    &      $6_p$       &       $\chi_4$        &   $\phi_{6,1}$    \\
	$V_{15,1}$  &      $15_p$      &       $\chi_7$        &   $\phi_{15,5}$   \\
	$V_{15,2}$  &      $15_q$      &       $\chi_8$        &   $\phi_{15,4}$   \\
	 $V_{20}$   &      $20_p$      &       $\chi_9$        &   $\phi_{20,2}$   \\
	 $V_{24}$   &      $24_p$      &      $\chi_{10}$      &   $\phi_{24,6}$   \\
	 $V_{30}$   &      $30_p$      &      $\chi_{11}$      &   $\phi_{30,3}$   \\
	 $V_{60}$   &      $60_p$      &      $\chi_{18}$      &   $\phi_{60,5}$   \\
	 $V_{64}$   &      $64_p$      &      $\chi_{19}$      &   $\phi_{64,4}$   \\
	 $V_{81}$   &      $81_p$      &      $\chi_{20}$      &   $\phi_{81,6}$   \\ \midrule
	  $V_1'$    &      $1_n$       &                       &   $\phi_{1,36}$   \\
	  $V_6'$    &      $6_n$       &                       &   $\phi_{6,25}$   \\
	$V_{15,1}'$ &      $15_n$      &                       &  $\phi_{15,17}$   \\
	$V_{15,2}'$ &      $15_m$      &                       &  $\phi_{15,16}$   \\
	 $V_{20}'$  &      $20_n$      &                       &  $\phi_{20,20}$   \\
	 $V_{24}'$  &      $24_n$      &                       &  $\phi_{24,12}$   \\
	 $V_{30}'$  &      $30_n$      &                       &  $\phi_{30,15}$   \\
	 $V_{60}'$  &      $60_n$      &                       &  $\phi_{60,11}$   \\
	 $V_{64}'$  &      $64_n$      &                       &  $\phi_{64,13}$   \\
	 $V_{81}'$  &      $81_n$      &                       &  $\phi_{81,10}$   \\ \midrule
	 $U_{10}$   &      $10_s$      &    $\chi_2+\chi_3$    &   $\phi_{10,9}$   \\
	 $U_{20}$   &      $20_s$      &    $\chi_5+\chi_6$    &  $\phi_{20,10}$   \\
	 $U_{60}$   &      $60_s$      & $\chi_{12}+\chi_{13}$ &   $\phi_{60,8}$   \\
	 $U_{80}$   &      $80_s$      & $\chi_{14}+\chi_{15}$ &   $\phi_{80,7}$   \\
	 $U_{90}$   &      $90_s$      & $\chi_{16}+\chi_{17}$ &   $\phi_{90,8}$   \\ \bottomrule
\end{tabular}
\end{minipage}
\end{table}
\end{landscape}

\begin{table}
\caption{Character table of $W(E_6)$. See text for notation.}
\label{character-table}
\setlength{\tabcolsep}{3.5pt}
\begin{tabular}{l|RRRRRRRRRR|RRRRR}
\toprule
Class $c$                       & V_1 & V_6 & V_{15,1} & V_{15,2} & V_{20} & V_{24} & V_{30} & V_{60} & V_{64} & V_{81} & U_{10} & U_{20} & U_{60} & U_{80} & U_{90} \\ \midrule
$(1^6)$                         &   1 &   6 &       15 &       15 &     20 &     24 &     30 &     60 &     64 &     81 &     10 &     20 &     60 &     80 &     90 \\
$(1^2, 2^2)$                    &   1 &   2 &       -1 &        3 &      4 &      0 &      2 &      4 &      0 &     -3 &      2 &     -4 &      4 &      0 &     -6 \\
$(1^{-2},2^4)$                  &   1 &  -2 &       -1 &        7 &      4 &      8 &    -10 &     -4 &      0 &      9 &     -6 &      4 &     12 &    -16 &     -6 \\
$(1^3,3)$                       &   1 &   3 &        3 &        0 &      5 &      0 &      3 &     -3 &      4 &      0 &     -2 &      2 &     -6 &     -4 &      0 \\
$(1^{-3},3^3)$                  &   1 &  -3 &        6 &       -3 &      2 &      6 &      3 &      6 &     -8 &      0 &      1 &     -7 &     -3 &    -10 &      9 \\
$(3^2)$                         &   1 &   0 &        0 &        3 &     -1 &      3 &      3 &     -3 &     -2 &      0 &      4 &      2 &      0 &      2 &      0 \\
$(1^2,2^{-2},4^2)$              &   1 &   2 &        3 &       -1 &      0 &      0 &     -2 &      0 &      0 &     -3 &      2 &      4 &      4 &      0 &      2 \\
$(2,4)$                         &   1 &   0 &       -1 &        1 &      0 &      0 &      0 &      0 &      0 &     -1 &     -2 &      0 &      0 &      0 &      2 \\
$(1,5)$                         &   1 &   1 &        0 &        0 &      0 &     -1 &      0 &      0 &     -1 &      1 &      0 &      0 &      0 &      0 &      0 \\
$(1,2,3^{-1},6)$                &   1 &   1 &       -1 &       -2 &      1 &      2 &     -1 &     -1 &      0 &      0 &      0 &     -2 &      0 &      2 &      0 \\
$(1^{-1},2^2,3)$                &   1 &  -1 &       -1 &        0 &      1 &      0 &     -1 &      1 &      0 &      0 &      2 &      2 &     -2 &      0 &      0 \\
$(1^{-2},2,6)$                  &   1 &  -2 &        2 &        1 &      1 &     -1 &     -1 &     -1 &      0 &      0 &      0 &     -2 &      0 &      2 &      0 \\
$(1,2^{-2},3^{-1},6^2)$         &   1 &   1 &        2 &        1 &     -2 &      2 &     -1 &      2 &      0 &      0 &     -3 &      1 &     -3 &      2 &     -3 \\
$(3^{-1},9)$                    &   1 &   0 &        0 &        0 &     -1 &      0 &      0 &      0 &      1 &      0 &      1 &     -1 &      0 &     -1 &      0 \\
$(1^{-1},2,3,4^{-1},6^{-1},12)$ &   1 &  -1 &        0 &       -1 &      0 &      0 &      1 &      0 &      0 &      0 &     -1 &      1 &      1 &      0 &     -1 \\ \midrule
$(1^4,2)$                       &   1 &   4 &        5 &        5 &     10 &      4 &     10 &     10 &     16 &      9 &        &  \\
$(2^3)$                         &   1 &   0 &       -3 &        1 &      2 &      4 &     -2 &      2 &      0 &     -3 &        &  \\
$(1^2,4)$                       &   1 &   2 &        1 &       -1 &      2 &      0 &      0 &     -2 &      0 &     -1 &        &  \\
$(1^{-2},2^2,4)$                &   1 &  -2 &        1 &        3 &      2 &      0 &     -4 &     -2 &      0 &      3 &        &  \\
$(1,2,3)$                       &   1 &   1 &       -1 &        2 &      1 &     -2 &      1 &      1 &     -2 &      0 &        &  \\
$(1^{-2},2,3^2)$                &   1 &  -2 &        2 &       -1 &      1 &      1 &      1 &      1 &     -2 &      0 &        &  \\
$(6)$                           &   1 &   0 &        0 &        1 &     -1 &      1 &      1 &     -1 &      0 &      0 &        &  \\
$(2,4^{-1},8)$                  &   1 &   0 &       -1 &       -1 &      0 &      0 &      0 &      0 &      0 &      1 &        &  \\
$(1^{-1},2,5)$                  &   1 &  -1 &        0 &        0 &      0 &     -1 &      0 &      0 &      1 &     -1 &        &  \\
$(1,2^{-1},3^{-1},4,6)$         &   1 &   1 &        1 &        0 &     -1 &      0 &     -1 &      1 &      0 &      0 &        &  \\ \bottomrule
\end{tabular}
\end{table}

\clearpage

\printbibliography

\end{document}